\newtheorem{thm}{Theorem}[section]
\newtheorem{coro}[thm]{Corollary}
\newtheorem{lemm}[thm]{Lemma}
\newtheorem{prop}[thm]{Proposition}
\newtheorem{prob}[thm]{Problem}
\theoremstyle{definition}
\newtheorem{defi}[thm]{Definition}
\theoremstyle{remark}
\newtheorem{rema}[thm]{Remark}
\date{\textsf{}}
\newcommand{\dist}[2]{{\rm dist\,}}
\begin{document}

\title{Algorithmically finite groups}


\author{Alexei Miasnikov}
\address{Department of Mathematical Sciences\\Stevens Institute of Technology\\Hoboken, NJ 07030}
\email{amiasnikov@gmail.com}

\author{Denis Osin}
\address{Mathematics Department\\
Vanderbilt University\\
Nashville, TN 37240}
\email{denis.v.osin@vanderbilt.edu}

\thanks{The research of the first author was supported by
NSF grant  DMS-0914773.}
\thanks{The research of the second author was supported by
NSF grant  DMS-1006345.}

\subjclass[2000]{ Primary: 20F65} \keywords{Word problem, equality problem, conjugacy problem, recursive set, recursively presented group}

\maketitle

\begin{abstract}
We call a group $G$ {\it algorithmically finite} if no algorithm can produce an infinite set of pairwise distinct elements of $G$. We construct examples of recursively presented infinite algorithmically finite groups and study their properties. For instance, we show that the Equality Problem is decidable in our groups only on strongly (exponentially) negligible sets of inputs.
\end{abstract}

\bigskip
\section{Introduction}
Recall, that a group $G$ is called {\em recursively presented} if it has a presentation $G = \langle X \mid R\rangle$, where the set of generators $X$ is finite and the set of relators $R$ is a computably enumerable  subset of $F(X)$. Here we view the free group $F(X)$ as the set of freely reduced words in the alphabet $X \cup X^{-1}$ with the standard multiplication (free reduction of the concatenation of two words). A subset $W \subseteq F(X)$ is {\it computably enumerable} if there exists an algorithm $A$ that computes a function $f_A: \mathbb{N} \to F(X)$ with $f_A(\mathbb{N}) = W$. Such a function $f_A$ gives a computable enumeration $W = \{w_1,w_2, \ldots, \}$ of the set $W$, where $w_i = f(i)$.

Recall that the Equality Problem (EP) in a group $G$ generated by a finite set $X$ is, given two words $u,v\in F(X)$, to decide whether $u$ and $v$ represent the same element of $G$. EP for groups is easily reducible to the Word Problem (WP), that is to decide if a given word from $F(X)$ represent the trivial element of $G$. So there is no need to consider them separately, and usually they are both referred to as the Word Problems. However, this may not be the case when one consider decidability of the problems on various subsets of $F(X)$, not the whole group $F(X)$.  In this settings the Equality Problem is the most natural one, and the only one that make sense in semigroups.

We say that EP  is decidable in $G$ on a set of inputs $S \subseteq F(X)$  if there is a partial decision algorithm for EP which halts on all pairs from  $S \times S$. The original Equality Problem for finitely presented groups was formulated by M.Dehn in 1912 \cite{Dehn} (and two years later  by A. Thue for semigroups in a similar fashion \cite{Thue})  in the following way:

\begin{quote}
\it Construct an algorithm to determine for an arbitrary finitely presented group $G = \langle X \mid R\rangle$ and any two words $u$ and $v$ in the alphabet $X \cup X^{-1}$ whether or not $u$ and $v$ represent the same
element of $G$.
\end{quote}

Certainly, Dehn and Thue believed that such an algorithm should exist. Notice, that  not only they asked to find a decision algorithm they were actually asking  to find a  {\em uniform} decision algorithm that would work for all such  groups (and semigroups).

In  1947 Markov \cite{Markov}  and Post \cite{Post}  constructed independently  first  finitely presented semigroups with undecidable EP, and in 1955 Novikov  \cite{Novikov}, and soon after W.W. Boone  \cite{Boone1,Boone2},   constructed  finitely presented groups with undecidable EP. Now there are much shorter  examples of
semigroups with undecidable  word problem constructed by G. S. Tseitin \cite{Tseitin}, D. Scott \cite{Scott}, Matiyasevich \cite{Mat}, Makanin \cite{Makanin}. Other examples of groups with undecidable Word Problem are due to    J. L. Britton \cite{Britton},  V.V. Borisov \cite{Borisov}, and D.J. Collins \cite{Collins}. An excellent exposition of the results in this area  with complete and improved  proofs is given in  the survey \cite{Adian} by S.I. Adian  and V.G. Durnev.

In this paper we introduce and study a class of groups with ``extremely undecidable" EP. We say that a finitely generated group $G$ is {\it algorithmically finite} if there is no algorithmic way to produce an infinite set of pairwise distinct elements of $G$. That is, if $G$ is generated by a finite set $X$, then there exists no computably enumerable subset $S\subseteq F(X)$ such that the natural image of $S$ consists of pairwise distinct elements. In particular, the EP is decidable in algorithmically finite groups only on those subsets $S\subseteq F(X)$, where it is obviously decidable, i.e., which have finite image in $G$. 

Clearly every finite group is algorithmically finite. Our main result shows that the converse does not hold. We say that a group $G$ is a {\em Dehn monster}, if it is recursively presented, infinite, and algorithmically finite.

\begin{thm}\label{main}
Dehn monsters exist. 
\end{thm}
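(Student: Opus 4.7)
The plan is a stage-by-stage diagonalization against all partial recursive enumerations of elements of $F(X)$, using Olshanskii's graded small-cancellation machinery to ensure the limit group is infinite. Fix $X=\{a,b\}$, write $F=F(X)$, and fix an effective enumeration $\phi_1,\phi_2,\ldots$ of all partial recursive functions $\phi_n\colon\mathbb{N}\rightharpoonup F$, so that every c.e.\ subset of $F$ is the range of some $\phi_n$. For each $n$ impose the requirement
\[
\mathcal R_n\colon\ \text{if }\mathrm{range}(\phi_n)\text{ is infinite, then }\phi_n(i)=\phi_n(j)\text{ in }G\text{ for some }i\ne j.
\]
Meeting all $\mathcal R_n$ makes $G$ algorithmically finite, since every c.e.\ subset of $F$ is a $\mathrm{range}(\phi_n)$.

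We begin with the observation that any infinite algorithmically finite group must be torsion --- an element $g=\bar w$ of infinite order gives an explicit c.e.\ injective family $\{w^k\}_{k\ge 1}$ --- so we include all $p$-th powers $\{w^p : w\in F\}$ in the initial set of relators $R_0$ for a large odd $p$, so that the initial quotient is the infinite Burnside group $B(2,p)$ (Adian--Novikov). We build $R=\bigcup_s R_s$ in stages. At stage $s$, for each outstanding $\mathcal R_n$ with $n\le s$, simulate $\phi_n$ for $s$ steps and then either (a) using the decidable word problem in the current finitely presented graded small-cancellation quotient $F/\langle\langle R_s\rangle\rangle$ find two outputs already coinciding, declaring $\mathcal R_n$ satisfied, or (b) find a pair $\phi_n(i)\ne\phi_n(j)$ whose lengths exceed a rapidly growing threshold $L_n$ and whose ratio $\phi_n(i)\phi_n(j)^{-1}$ may be adjoined to $R_s$ while preserving a graded small-cancellation condition $C'(\lambda)$; in that case adjoin the relator and declare $\mathcal R_n$ satisfied. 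The resulting $R$ is c.e.\ by construction, so $G=F/\langle\langle R\rangle\rangle$ is recursively presented, and by Olshanskii's theorem on iterated small-cancellation quotients of Burnside groups the limit $G$ remains infinite.

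The principal obstacle is the combinatorial density lemma needed to ensure every requirement is eventually handled. If step (a) never fires for $\mathcal R_n$, then $\mathrm{range}(\phi_n)$ has infinite image in every intermediate $F/\langle\langle R_s\rangle\rangle$, so its image is unbounded in each quotient and contains elements of arbitrarily large length. One must extract from such a c.e.\ set a pair of long elements whose ratio is aperiodic enough and has sufficiently small overlap with the finitely many prior relators in $R_s$ to satisfy $C'(\lambda)$. The heart of the argument is this density/genericity statement --- that an infinite c.e.\ subset of $F$ whose image remains infinite in all Burnside-type intermediate quotients cannot asymptotically avoid the combinatorially generic long words --- together with scheduling the thresholds $L_n$ to grow fast enough for the graded small-cancellation theorem to apply throughout the construction.
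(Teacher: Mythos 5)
Your overall architecture (enumerate all c.e.\ sets, meet a collision requirement $\mathcal R_n$ for each, keep the limit group infinite by controlling the added relators) matches the paper's, but the mechanism you propose for keeping the group infinite has a genuine gap at exactly the point you flag as ``the principal obstacle.'' Your construction requires that from any infinite c.e.\ set whose image stays infinite in the current quotient you can extract a pair $u\ne v$ such that $uv^{-1}$ can be adjoined while preserving a graded small-cancellation condition. This is not proved, and it is not clear it can be: the c.e.\ set is handed to you adversarially, and nothing prevents it from consisting entirely of words whose pairwise ratios are proper powers, are heavily periodic, or have long overlaps with the relators already in $R_s$ --- precisely the words that graded small cancellation forbids. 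Length thresholds $L_n$ alone do not buy genericity; small cancellation is a condition on \emph{pieces}, not on length, and the ``density/genericity statement'' you invoke is the entire difficulty, not a technical afterthought. Constructions in the Olshanskii tradition (e.g.\ groups with finitely many conjugacy classes) work because the relators are \emph{chosen} to be generic; here you have no freedom of choice beyond selecting a pair from a set you do not control.

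The paper escapes this trap by replacing small cancellation with the Golod--Shafarevich inequality, which only counts relators by their \emph{degree} in the Zassenhaus $p$-filtration and is completely indifferent to their combinatorial shape. The collision pair is found by pigeonhole: since $F/D_{e+k(\varepsilon)}F$ is a finite $p$-group, any infinite c.e.\ set $W_e$ must contain two distinct words $u,v$ equal in that quotient, and then $\deg(uv^{-1})\ge e+k(\varepsilon)$ automatically. The requirement $({\bf L}_e)$ bounding the number of relators of degree $\le e$ by $\max\{e-k(\varepsilon),0\}$ keeps the Golod--Shafarevich series negative, so the group stays infinite (and is non-amenable for free), and torsion comes out as a consequence rather than being imposed via Burnside relators at the start. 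If you want to rescue your route, you would need to prove your density lemma, which appears to be at least as hard as the theorem itself; I recommend instead replacing the small-cancellation control by a degree-counting argument in the spirit of Golod--Shafarevich, where the existence of the needed pair is a pigeonhole triviality.
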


The proof of Theorem \ref{main} is based on new ideas and does not interpret any machines. Instead,  it uses Golod-Shafarevich presentations as a tool to control consequences of relations and analogues of {\em simple} sets from computability theory.  The groups constructed in this way are infinitely presented and the the following problem remains open. 
\begin{prob}\label{prob1}
Does there exist a finitely presented Dehn monster?
\end{prob}
Note that this is a real challenge, since every Dehn monster is an infinite torsion group (see Proposition \ref{properties}) and no examples of finitely presented infinite torsion groups are known.

In the era when much of the focus is on practical computing, the complexity of computations became an important issue.    In \cite{KMSS1,KMSS2}, a new notion of {\em generic complexity} of computations was introduced. In this model one is looking for partial decision algorithms which perform well on typical (generic) sets of inputs. In particular, for a group $G$ with a finite generating set $X$  EP is {\it generically decidable} if there is a partial algorithm $A$ solves the Word Problem in $G$ correctly on most words from  $F(X)$. That is, the halting set of $A$ is {\em generic} with respect to the stratification of $F(X)$ given by the standard length function $|\cdot |$ on $F(X)$. Recall that $T \subseteq F(X)$ is generic in $F(X)$ if
 $$\rho_n(T)  = \frac{|T \cap S_n|}{|S_n|} \to 1 \ for \ n \to \infty, $$ where $S_n = \{w \in F(X) \mid |w| = n\}$. Complements of generic subsets are callled {\em negligible}.

It has been noticed soon that many undecidable problems are generically decidable, and the generic decision algorithms quite often are very efficient. For example, it was shown in \cite{HM} that the famous Halting Problem for Turing machines with one-ended infinite tape is generically decidable in polynomial time, and all the groups and semigroups with undecidable EP mentioned above have linear time generic decision algorithms  \cite{MUW}.

The cryptography quest for algorithmic problems which are hard on most inputs  generated a new waive of interest to ``generically hard" algorithmic problems in group theory (see, for example, the book  \cite{MSUbook}). A new idea on how to ``amplify" the algorithmic hardness of EP in semigroups was introduced in \cite{MR}. It turns out that given a finitely presented semigroup $S$ with undecidable EP one can construct a new finitely presented semigroup $S^\prime$ where EP is undecidable on every generic set of inputs. Unfortunately, this construction does not work for groups. 

This paper was partially motivated by the question of whether groups with ``generically hard" EP exist. Answering this question we prove that infinite algorithmically finite groups satisfy a property which is much stronger than  undecidability of the EP on generic set of inputs. Recall that a subset $S\subseteq F(X)$ is {\em strongly negligible} (or exponentially negligible)  if there exists $t>1$ such that $\rho_n(S) = O(t^{-n})$ as $n\to \infty$. The result below follows immediately from Lemma \ref{changegs} and Theorem \ref{th:Equality}.

\begin{thm}\label{genund}
Let $G$ be an algorithmically finite group generated by a finite set $X$, $S\subseteq F(X)$ a subset with decidable EP in $G$. Then $S$ is negligible. Moreover, if $G$ is non-amenable, then $S$ is strongly negligible.
\end{thm}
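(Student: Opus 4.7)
The plan is to reduce the theorem to two ingredients: (i) the image $\overline{S}\subseteq G$ of $S$ is finite, and (ii) the density in $F(X)$ of the preimage of any single element of $G$ tends to zero, exponentially in the non-amenable case.

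For (i), I would use the partial decision algorithm $A$ (which halts on every pair in $S\times S$ and is correct on its halting set) to build a computably enumerable set $L\subseteq F(X)$ on which the natural map to $G$ is injective and such that $\overline{S}\subseteq\overline{L}$. In the main case where $S$ itself is c.e., one enumerates $S=\{s_1,s_2,\ldots\}$; for each $s_i$ one adds $s_i$ to $L$ provided $A$ certifies $l\neq_G s_i$ for every $l$ already in $L$, and $A$ is guaranteed to halt on each such pair because both arguments lie in $S$. Correctness of $A$ makes $L\to G$ injective, so algorithmic finiteness of $G$ forces $L$ to be finite. By construction every $s\in S$ either ends up in $L$ or is witnessed by $A$ to be $G$-equal to some element of $L$, so $\overline{S}=\overline{L}$ is finite.

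For (ii), one has $S\subseteq\bigcup_{g\in\overline{S}}\pi^{-1}(g)$, a finite union of cosets of the normal subgroup $N=\ker(F(X)\to G)$, and any fixed coset has the same density in the balls of $F(X)$ as $N$ itself up to a bounded multiplicative factor. Infiniteness of $G$ gives $|N\cap B_n|/|B_n|\to 0$; Grigorchuk's cogrowth criterion says that $G$ is non-amenable if and only if this density is $O(t^{-n})$ for some $t>1$. Summing over the finitely many elements of $\overline{S}$ then gives $\rho_n(S)\to 0$ in general and $\rho_n(S)=O(t^{-n})$ in the non-amenable case. I expect Lemma~\ref{changegs} to provide precisely this cogrowth-to-negligibility translation in the stratification the paper uses.

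The main obstacle is executing step (i) without the c.e.\ hypothesis on $S$: a word $u\in L\setminus S$ may leave $A(u,s)$ undecided for $s\in S$ and indefinitely block $s$ from being processed. This is handled either by replacing $A$ with an equivalent partial algorithm whose halting set is precisely $S\times S$ (which is possible exactly when $S$ itself is c.e., a hypothesis reasonable in this context) or by a priority-style refinement that commits a word to $L$ only after a growing number of verifications succeed. Everything else — greedy extraction, the appeal to algorithmic finiteness, and the cogrowth bound — is then routine.
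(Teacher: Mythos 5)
Your argument is essentially the paper's own: Theorem~\ref{genund} is deduced there from Theorem~\ref{th:Equality}, whose part (a) is exactly your greedy extraction of a computably enumerable subset with pairwise $G$-distinct images (contradicting algorithmic finiteness, hence $\eta(S)$ finite), and whose part (b) is your finite-union-of-cosets estimate together with Grigorchuk's cogrowth criterion for the non-amenable case; the only misattribution is that Lemma~\ref{changegs} supplies the independence of the generating set, not the cogrowth-to-negligibility translation. The obstacle you flag for non-c.e.\ $S$ is genuine, but it is present in the source as well: Theorem~\ref{th:Equality} explicitly assumes $W$ computably enumerable, so the derivation of the present statement implicitly reads $S$ as c.e., which is precisely the resolution you adopt (note also that both your step (ii) and the paper tacitly use that $G$ is infinite, without which the claim fails for $S=F(X)$).
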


In fact, the groups constructed in Theorem \ref{main} are non-amenable. Thus we immediately obtain the following. 

\begin{coro}
There exists a recursively presented group $G$ such that for every finite generating set $X$ of $G$, every subset $S\subseteq F(X)$ with decidable EP in $G$ is strongly negligible.
\end{coro}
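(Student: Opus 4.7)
The plan is to combine Theorem \ref{main} with Theorem \ref{genund} directly. I would begin by invoking Theorem \ref{main} to obtain a Dehn monster $G$, i.e.\ a recursively presented, infinite, algorithmically finite group. The paragraph immediately preceding the corollary asserts that the construction used in the proof of Theorem \ref{main} actually yields a non-amenable group; I would take this for granted and regard it as the input that upgrades ``negligible'' to ``strongly negligible'' in Theorem \ref{genund}. Given any finite generating set $X$ of $G$ and any $S\subseteq F(X)$ with decidable EP in $G$, I would then apply Theorem \ref{genund} to conclude that $S$ is strongly negligible.

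The only point that needs verification is that the hypotheses of Theorem \ref{genund} do not depend on the choice of finite generating set. Recursive presentability is well known to be independent of the choice of finite generating set, and non-amenability is a property of the abstract group. For algorithmic finiteness, given two finite generating sets $X$ and $Y$ for $G$, I would fix once and for all concrete expressions $w_y\in F(X)$ representing each generator $y\in Y$, and use them to define a computable substitution map $\psi\colon F(Y)\to F(X)$ that preserves the natural image in $G$. If some c.e.\ subset $T\subseteq F(Y)$ had pairwise distinct images in $G$, then $\psi(T)\subseteq F(X)$ would be a c.e.\ subset with the same property, contradicting algorithmic finiteness with respect to $X$. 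Hence algorithmic finiteness is independent of the finite generating set, and Theorem \ref{genund} applies to $(G,X)$ for every such $X$.

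I do not foresee a genuine obstacle: the corollary really is immediate once Theorem \ref{main} is strengthened by the non-amenability remark and Theorem \ref{genund} is accepted. The only part that deserves a sentence in writing is the generating-set-independence of algorithmic finiteness, since the definition is phrased in terms of a fixed $X$ while the statement of the corollary quantifies over all finite generating sets.
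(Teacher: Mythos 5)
Your proposal is correct and follows the paper's own route exactly: take the non-amenable Dehn monster from Corollary \ref{nonamen}, and apply Theorem \ref{genund}, whose independence from the choice of generating set is supplied by the substitution argument you describe (this is precisely Lemma \ref{changegs} in the paper). Nothing further is needed.
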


Theorem \ref{genund} motivates a weaker version of Problem \ref{prob1}, which is is still open and very intriguing (see \cite{MR,GMO}).
\begin{prob} 
Does there exist a finitely presented group such that a) the EP is decidable only on negligible sets of inputs; b) the EP is undecidable on every generic set of inputs?
\end{prob}
Clearly a) implies b), but the converse is, a priori, not true. The best current result in this direction is Theorem 3 from  \cite{GMO}: {\it if  WP in a finitely presented amenable group $G$  is  undecidable then it is undecidable on every exponentially generic set for any choice of generators  in $G$.} (Recall that a subset $S\subseteq F(X)$ is {\it exponentially generic set} if $F(X)\setminus S$ is strongly negligible.) Such amenable groups do exist \cite{Kh}, their  construction  simulates the work of a Minsky machine  by the defining relations.

\section{Construction}

\subsection{Golod-Shafarevich presentations}

Let us fix a prime $p$ and let $X = \{x_1, . . . , x_d\}$  be a finite set, and
$F = F(X)$ a  free group on $X$. For a fixed prime number $p$ denote by $\Lambda_p$ the  ring   $\mathbb{Z}_p[[u_1, \ldots,u_d]]$ of non-commutative formal power series over the field $\mathbb{Z}_p$ in $d$ variables $u_1, \ldots,u_d$. The
map $X \to \Lambda_p$ given by $x_i \mapsto 1 + u_i$ extends (uniquely) to an injective homomorphism $\pi: F \to \Lambda_p^\ast$, called  the Magnus embedding.

Let $I$ denote the ideal of $\Lambda_p$ generate by $u_1, \ldots , u_d$. The Zassenhaus $p$-series (filtration)
$$F = D_1F > D_2F > \ldots > D_n F > \ldots $$
in $F$ is defined by the dimension subgroups $D_nF$ of $F$, where
$$
D_nF  = \{f \in F \mid f \equiv 1\ mod \  I^n\}
$$
It is not hard to see that for any $f \in F$ there exists a unique $n \geq 1$   (termed the {\em  degree } $deg (g)$ of $f$) such that $f \in D_nF \smallsetminus D_{n+1}F$. Moreover, for any $g, h \in F$
$$deg ([g, h]) = deg (g) + deg (h), \ \ deg (g^p) =  p \cdot deg (g).$$
 It follows that for every $i, j \in \mathbb{N}$
$$
D_1F = F, \ \ [D_iF,D_jF] \subseteq D_{i+j}F, \ \  (D_iF)^p \subseteq D_{ip}F.
$$
In particular, the quotients $F/D_nF$ are finite $p$-groups.

If $G$ is a group generated by $X$ then subgroups of finite index $p^k, k \in \mathbb{N}$, form a basis of the pro-$p$ topology on $G$, which makes $G$ into a topological group. The completion $G_{\hat{p}}$ of $G$ in this topology is the pro-$p$-completion of $G$. If  $G$ is given by a presentation $G = \langle X \mid R\rangle$  then the pro-$p$ completion $G_{\hat{p}}$ of $G$ has the same presentation  $G_{\hat{p}} = \langle X \mid R\rangle$ in the category of pro-$p$ groups. Let  $ d(G_{\hat{p}})$ be  the minimal number of (topological) generators of $G_{\hat{p}}$. Then   $ |X| = d(G_{\hat{p}})$ if and only if $R$ has no relators of degree 1.

Let  $P = \langle X \mid R\rangle$ be a presentation. Denote by $n_i(R)$ the number of relators in $R$ of degree $i$  with respect to
the Zassenhaus $p$-series in $F(X)$.  Consider  the following formal power series:
  $$H_p(X,R,t) = 1-\hat{d}t + \sum\limits_{i = 1}^\infty n_i(R)t^i,$$
  where $\hat{d} = d(G_{\hat{p}})$ is the minimal number of topological generators of the pro-$p$ completion $G_{\hat{p}} $ of the discrete group
$G$ defined by the presentation $\langle X \mid R\rangle$.
It follows from the above that $\hat{d} = |X|$ if  and only if $n_1(R) = 0$.

The presentation $P$ is termed a  {\em Golod-Shafarevich presentation} if there exists $t_0, 0 < t_0 < 1$  such that $H_p(X,R,t_0) < 0$.

The following is the principal result about Golod-Shafarevich presentations. It was proved by   Golod and Shafarevich \cite{GS} with some further improvements by  Vinberg and Gaschutz.

\begin{thm}\label{GSinf}
Let $G$ be an (abstract) group defined by a Golod-Shafarevich presentation. Then the pro-$p$ completion of $G$ is infinite. In particular, $G$ is infinite.
\end{thm}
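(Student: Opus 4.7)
The plan is to argue by contrapositive: I will assume the pro-$p$ completion $G_{\hat p}$ is finite and derive a contradiction with the Golod--Shafarevich hypothesis $H_p(X,R,t_0)<0$.

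First I would set $B=\mathbb{Z}_p[G_{\hat p}]$, which is finite-dimensional by assumption, and study its augmentation ideal $\mathfrak m$. Because $G_{\hat p}$ is a finite $p$-group, $\mathfrak m$ is nilpotent, so I can pass to the associated graded algebra
\[
A:=\mathrm{gr}_{\mathfrak m}B=\bigoplus_{n\ge 0}\mathfrak m^n/\mathfrak m^{n+1},
\]
which is a finite-dimensional graded $\mathbb{Z}_p$-algebra. I would then use the Magnus embedding, extended to an isomorphism between the completed group algebra of $F$ (filtered by its augmentation ideal) and $\Lambda_p$ (filtered by powers of $I$) sending $x_i-1\mapsto u_i$, to write $\mathbb{Z}_p[[G_{\hat p}]]=\Lambda_p/J$, where $J$ is the closed two-sided ideal generated by the elements $\pi(r)-1$, $r\in R$. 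Since $\pi(r)-1$ has initial form of degree exactly $\deg r$, a standard calculation (write an element of $J$ as a $\Lambda_p$-combination of these generators and compare lowest-degree contributions) shows that the associated graded ideal $\mathrm{gr}(J)$ is generated, as a two-sided ideal of $\mathrm{gr}(\Lambda_p)=\mathbb{Z}_p\langle u_1,\ldots,u_d\rangle$, by the initial forms of $\pi(r)-1$. Hence $A=\mathbb{Z}_p\langle u_1,\ldots,u_d\rangle/L$ for a homogeneous ideal $L$ having at most $n_i(R)$ generators in each degree $i\ge 1$.

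The main obstacle is the Golod--Shafarevich combinatorial inequality itself: for any graded algebra $A$ of the above form, the Hilbert series must satisfy the coefficient-wise inequality of formal power series
\[
H_A(t)\cdot\Bigl(1-dt+\sum_{i\ge 1}n_i(R)\,t^i\Bigr)\;\ge\;1.
\]
I would prove this by the standard dimension-counting argument, which yields the equivalent recursive inequality $\dim A_n\ge d\dim A_{n-1}-\sum_{i\ge 1}n_i(R)\dim A_{n-i}$ for every $n\ge 1$. This is the one genuinely nontrivial ingredient in the whole argument.

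To finish, I would observe that $H_p(X,R,t)=\bigl(1-dt+\sum_{i\ge 1}n_i(R)t^i\bigr)+(d-\hat d)t$ with $d-\hat d\ge 0$, so the left-hand factor in the displayed inequality is bounded above by $H_p(X,R,t)$ for every $t>0$. Choosing $t_0\in(0,1)$ with $H_p(X,R,t_0)<0$ therefore forces that factor to be strictly negative at $t_0$. But $A$ is finite-dimensional, so $H_A$ is a polynomial with nonnegative coefficients and $H_A(t_0)\ge 0$; the product on the left of the displayed inequality is then $\le 0$, contradicting the bound $\ge 1$. Therefore $G_{\hat p}$, and a fortiori $G$, must be infinite.
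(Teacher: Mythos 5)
The paper does not prove this theorem at all: it is quoted as the classical Golod--Shafarevich result (with the Vinberg--Gasch\"utz improvements) and supported only by the citation to \cite{GS}. Your outline is the standard classical proof, and most of it is sound: passing to the finite-dimensional completed group algebra $\Lambda_p/J$, the recursion $\dim A_n\ge d\dim A_{n-1}-\sum_i n_i(R)\dim A_{n-i}$, the comparison of $1-dt+\sum n_it^i$ with $H_p(X,R,t)$ via $d\ge\hat d$, and the evaluation at $t_0$ are all correct.

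There is, however, one genuinely false step, and it is load-bearing as you have arranged the argument: the claim that $\mathrm{gr}(J)$ is generated by the initial forms of the elements $\pi(r)-1$. This fails in general because of cancellation of leading terms. For instance, if $f_1=u_1u_2+u_1^3$ and $f_2=u_1u_2$, then $u_1^3=f_1-f_2$ lies in $J$ and contributes a degree-$3$ element to $\mathrm{gr}(J)$ that is not in the two-sided ideal generated by the common initial form $u_1u_2$. Consequently the homogeneous ideal $L$ may require strictly more than $n_i(R)$ generators in degree $i$, and the graded Golod--Shafarevich inequality you then invoke would come with the wrong (larger) relation counts, destroying the sign argument at $t_0$. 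Note that you cannot instead apply the inequality to $\mathbb{Z}_p\langle u_1,\dots,u_d\rangle/(\text{initial forms})$, because that algebra only surjects onto $A$ and you have no finiteness for it --- the finite-dimensionality you need comes from $A$ itself. The repair is standard but changes the mechanism: run the dimension count entirely in the filtered category (Vinberg's argument). From $\Lambda_p=\mathbb{Z}_p\oplus I$ one gets $J=\sum_r(\pi(r)-1)\Lambda_p+IJ$, and comparing the filtrations $(I^n+J)/J$ directly yields the recursion $\dim A_n\ge d\dim A_{n-1}-\sum_i n_i(R)\dim A_{n-i}$ for $A=\mathrm{gr}(\Lambda_p/J)$ without any claim about generators of $\mathrm{gr}(J)$. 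With that substitution the rest of your argument goes through verbatim.
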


\subsection{Dehn monsters}

Let $G$ be a group given by a presentation $\langle X \mid R\rangle$ with a finite set of generators $X = \{x_1, . . . , x_d\}$. We refer to the canonical epimorphism $\eta: F(X) \to G$ as to the {\em projection}.

\begin{lemm}\label{af}
The following properties are equivalent for every group $G$ with a finite generating set $X$.
\begin{enumerate}
\item[(a)] For every infinite computably enumerable subset $W \subseteq F(X)$, there exist infinitely many pairs of distinct words $u_i, v_i$ such that $\eta(u_i) = \eta(v_i)$.
\item[(b)] For every infinite computably enumerable subset $W \subseteq F(X)$, there exist at least two distinct words $u, v\in W$ such that $\eta(u) = \eta(v)$.
\item[(c)] If  EP is decidable on a computably enumerable subset $W\subseteq F(X)$, then $\eta (W)$ is finite.
\end{enumerate}
\end{lemm}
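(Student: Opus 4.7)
The plan is to prove the cyclic chain of implications (a) $\Rightarrow$ (b) $\Rightarrow$ (c) $\Rightarrow$ (a). The first of these is immediate: if infinitely many collision pairs exist, then in particular two distinct words mapping to the same element exist. So the real content lies in the other two implications, each of which I would prove by contraposition, exploiting the basic computability-theoretic fact that a c.e.\ subset of a c.e.\ set remains c.e., and that an EP oracle on a set $W$ lets one filter an enumeration of $W$.

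For (b) $\Rightarrow$ (c), suppose EP is decidable on some c.e.\ $W$ yet $\eta(W)$ is infinite. Enumerate $W = \{w_1, w_2, \ldots\}$ and define a subsequence $W' = \{w_{i_1}, w_{i_2}, \ldots\}$ greedily: set $i_1 = 1$, and at stage $k+1$ let $i_{k+1}$ be the least index $n > i_k$ such that $\eta(w_n) \neq \eta(w_{i_j})$ for all $j \leq k$. This is effective using the EP oracle on $W$, and since $\eta(W)$ is infinite the procedure never terminates, so $W'$ is an infinite c.e.\ set on which $\eta$ is injective. In particular no two distinct words of $W'$ have the same $\eta$-image, contradicting (b). For (c) $\Rightarrow$ (a), suppose (a) fails for some infinite c.e.\ $W$, so only finitely many pairs $(u,v)$ of distinct words in $W$ satisfy $\eta(u) = \eta(v)$. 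These pairs involve only finitely many words; removing those from $W$ yields an infinite c.e.\ set $W''$ on which $\eta$ is injective. But then EP on $W''$ is trivially decidable (for $u, v \in W''$, $\eta(u) = \eta(v)$ iff $u$ and $v$ are the same freely reduced word), while $\eta(W'')$ is infinite, contradicting (c).

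There is no serious obstacle here; the argument is essentially bookkeeping with computable enumerations. The only point requiring mild care is the greedy filtration in (b) $\Rightarrow$ (c), where one must verify that the subsequence $W'$ is itself c.e.\ — this is clear because its enumeration is produced by a partial algorithm that uses the hypothesized EP decision procedure on $W$ as a subroutine and halts on every input (since $\eta(W)$ is infinite). The contrapositive form in each direction is what makes the computability hypotheses usable, since (b) and (c) are naturally phrased as universal statements.
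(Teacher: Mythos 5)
Your proof is correct and uses essentially the same ingredients as the paper's: the EP-oracle filtration for (b) $\Rightarrow$ (c) is exactly the paper's ``mixing'' argument, and your (c) $\Rightarrow$ (a) combines the paper's two observations that an $\eta$-injective c.e.\ set has trivially decidable EP and that collision pairs can be stripped off (you remove the finitely many offending words all at once instead of inductively). The only difference is organizational: you close a single cycle (a) $\Rightarrow$ (b) $\Rightarrow$ (c) $\Rightarrow$ (a), whereas the paper proves (a) $\Rightarrow$ (b), (b) $\Leftrightarrow$ (c), and (b) $\Rightarrow$ (a) separately; both decompositions are complete.
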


\begin{proof}
Obviously (a) implies (b). Further suppose that EP is decidable on some $W\subseteq F(X)$ such that $\eta (W)$ is infinite. Then mixing the algorithm computing $W$ and the algorithm solving  EP on $W$ in the obvious way, we can compute a subset $W^\prime \subseteq W$ such that $\eta (W^\prime )$ is infinite and consists of pairwise distinct elements. This contradicts (b). Thus (b) implies (c). Notice, that (c) implies (b). Indeed if (b) does not hold, i.e., there exists an infinite computably enumerable subset $W \subseteq F(X)$ such that $\eta\vert_W$ is bijective, then  EP is obviously decidable on $W$ - different  words define different elements. Assume (b) now. If $W$ is an infinite computably enumerable set then there are words   $u_1, v_1 \in W$ such that $u_1 \neq v_1$ and $u_1 = v_1$ in $G$. Put $W^{(1)} = W\smallsetminus \{u_1,v_1\}$. Clearly $W^{(1)}$ is again an infinite computably enumerable subset of $F(X)$, so there are $u_2, v_2 \in W^{(1)}$ such that $u_2 \neq v_2$ and $u_2 = v_2$ in $G$.  Observe, that $(u_1,v_1) \neq (u_2,v_2)$. Continue this way one can show by induction  that there exist infinitely many distinct pairs of distinct words $u_i, v_i$ in  $W$ such that $\eta(u_i) = \eta(v_i)$, so (a) holds.\end{proof}

\begin{defi}\label{defaf}
A is called {\em algorithmically finite} if it satisfies one of the properties (a)--(c) from Lemma \ref{af}. A {\it Dehn monster} is an infinite recursively presented algorithmically finite group. \end{defi}

The following result shows that Definition \ref{defaf} does not depend on the generating set.

\begin{lemm}\label{changegs}
If  a finitely generated group group $G$ is algorithmically finite with respect to some finite generating set $X$ then it is algorithmically finite with respect to any finite generating set of $G$.
\end{lemm}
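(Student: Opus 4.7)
The plan is to use characterization (b) of Lemma \ref{af} and argue by contraposition. Suppose $G$ is algorithmically finite with respect to $X$, and let $Y$ be another finite generating set; I want to verify (b) for $Y$. If (b) failed for $Y$, there would exist an infinite computably enumerable set $W \subseteq F(Y)$ such that the restriction of the projection $\eta_Y : F(Y) \to G$ to $W$ is injective. From such a $W$ I will manufacture a subset of $F(X)$ witnessing the failure of (b) for $X$.

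The bridge between the two free groups is a substitution map. Since $X$ generates $G$, for each letter $y \in Y$ I fix once and for all a word $\sigma(y) \in F(X)$ with $\eta_X(\sigma(y)) = y$. Because $Y$ is finite, this is a finite table of choices, so $\sigma$ extends to a computable homomorphism $\sigma : F(Y) \to F(X)$ satisfying $\eta_X \circ \sigma = \eta_Y$ by construction.

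Two simple observations finish the argument. First, $\sigma|_W$ is injective: if $u \neq v$ in $W$ with $\sigma(u) = \sigma(v)$, then $\eta_Y(u) = \eta_X(\sigma(u)) = \eta_X(\sigma(v)) = \eta_Y(v)$, contradicting the injectivity of $\eta_Y|_W$. Second, $\eta_X$ is injective on $\sigma(W)$: if $\eta_X(\sigma(u)) = \eta_X(\sigma(v))$ for $u, v \in W$, then $\eta_Y(u) = \eta_Y(v)$, hence $u = v$ and therefore $\sigma(u) = \sigma(v)$. Combining these, $\sigma(W) \subseteq F(X)$ is infinite (being the image of an infinite set under an injection), computably enumerable (as the image of a c.e.\ set under a computable map), and $\eta_X|_{\sigma(W)}$ is injective. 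This contradicts the algorithmic finiteness of $G$ with respect to $X$ via characterization (b) of Lemma \ref{af}.

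There is no serious obstacle in this argument; the only subtle point worth emphasizing is that $\sigma$ is genuinely computable despite the non-canonical choice of preimages for the generators, precisely because only finitely many such choices are needed. Everything else is a routine transport along the homomorphism $\sigma$ using the fact that c.e.\ sets are preserved by computable maps.
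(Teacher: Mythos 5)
Your proof is correct and follows essentially the same route as the paper's: both transport an infinite computably enumerable set $W \subseteq F(Y)$ into $F(X)$ via the computable substitution homomorphism determined by choosing, for each $y \in Y$, a word $u_y \in F(X)$ representing the same element of $G$. If anything, your version is slightly more careful, since you explicitly verify that $\sigma|_W$ is injective (so that $\sigma(W)$ is genuinely infinite), a point the paper's proof passes over without comment.
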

\begin{proof}
Let $X$ and $Y$ be two finite generating sets of $G$. Then for every $y \in Y \cup Y^{-1}$ there is a word $u_y \in F(X)$ such that $\eta(y) = \eta(u_y)$. Now if $W$ is an infinite computably enumerable subset of $F(Y)$ then given a  word $v \in W$ one can replace each letter $y \in Y\cup Y^{-1}$ by the word  $u_y$ and  get a new word $v'$, and the set $W' = \{v' \mid v \in W\} \subseteq F(X)$. The set $W'$ is an infinite computably enumerable set of words in $F(X)$.  Since $v$ and $v'$ define the same elements in $G$ it follows that two elements $v_1, v_2 \in W$ define the same element in $G$ if and only if $v_1'$ and $v_2'$ define the same element in $G$. Now the result follows.
\end{proof}

In our construction of  algorithmically finite groups we use an analogue of the notion of a simple  set of natural numbers from recursion theory (see, for example, \cite{Rogers,Cooper}).
\begin{defi}\label{simple}
A subset  $R \subseteq F(X) $ is called a {\it simple set of relations} if the following hold:
\begin{enumerate}
\item[(a)]  $R$ is computably enumerable.
\item[(b)]  The group $\langle X \mid R\rangle$ is infinite.
\item[(c)]  For every infinite computably  enumerable set $B \subseteq F(X)$ there are two distinct words $u, v \in B$ such that $u v ^{-1}\in R$.
\end{enumerate}
\end{defi}
This notion is a modification of the notion of the standard simple set of numbers (or words) in computability theory, where the condition ``$R$ is co-infinite" is replaced  by a much stronger condition (b).

\begin{lemm}\label{le:simpe-relations}
For a group presentation $G = \langle X \mid R\rangle$, the following conditions are equivalent.
\begin{itemize}
\item[(a)] $R$ is a simple set of relations.
\item[(b)] The normal closure $\langle \langle R \rangle \rangle$ of $R$ in $F(X)$ is a simple set of relations.
\item[(b)] $G$ is a Dehn Monster.
\end{itemize}
\end{lemm}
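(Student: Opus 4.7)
The three conditions are linked by the single elementary observation that $uv^{-1} \in \langle\langle R\rangle\rangle$ if and only if $\eta(u) = \eta(v)$. My plan is to traverse the cycle $(a) \Rightarrow (b) \Rightarrow (c) \Rightarrow (b)$; the substantive content lies in the equivalence $(b) \Leftrightarrow (c)$, while $(a) \Rightarrow (b)$ is essentially formal.

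For $(a) \Rightarrow (b)$: the group $G = \langle X \mid R\rangle = \langle X \mid \langle\langle R\rangle\rangle\rangle$ is unchanged, so condition (b) of Definition~\ref{simple} transfers directly; the inclusion $R \subseteq \langle\langle R\rangle\rangle$ makes condition (c) only easier, since a witnessing pair $u,v$ for $R$ remains a witnessing pair for $\langle\langle R\rangle\rangle$. For condition (a) (computable enumerability), I would invoke the standard fact that the normal closure of a c.e.\ subset of $F(X)$ is itself c.e.: given enumerations of $R$ and of $F(X)$, one mechanically enumerates all conjugates $w r^{\pm 1} w^{-1}$ and their finite products.

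For $(b) \Rightarrow (c)$: computable enumerability of $\langle\langle R\rangle\rangle$ exhibits $G$ as recursively presented, and infiniteness of $G$ is part of the simple-set hypothesis. For algorithmic finiteness, take an arbitrary infinite c.e.\ $B \subseteq F(X)$ and apply condition (c) of Definition~\ref{simple} to $\langle\langle R\rangle\rangle$: one obtains distinct $u, v \in B$ with $uv^{-1} \in \langle\langle R\rangle\rangle$, i.e.\ $\eta(u) = \eta(v)$. This is precisely condition (b) of Lemma~\ref{af}, so $G$ is algorithmically finite.

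For $(c) \Rightarrow (b)$: if $G$ is a Dehn monster, then recursive presentability, being a group invariant independent of the chosen finite generating set, forces $\langle\langle R\rangle\rangle = \ker\eta$ to be c.e.\ in $F(X)$. Infiniteness is part of the Dehn monster hypothesis, and algorithmic finiteness, via Lemma~\ref{af}(b) and the identity $uv^{-1} \in \langle\langle R\rangle\rangle \iff \eta(u) = \eta(v)$, yields condition (c) of Definition~\ref{simple} for $\langle\langle R\rangle\rangle$. The only step requiring brief care is the generating-set-invariance of recursive presentability, which is standard: expressing new generators as words in the old ones and vice versa contributes only a finite set of additional relators and preserves computable enumerability of the kernel. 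Otherwise the argument is a direct dictionary translation between Definition~\ref{simple} and Lemma~\ref{af}, and I foresee no genuine obstacle.
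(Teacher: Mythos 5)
Your individual implications are all correct and carefully justified, but the overall logical structure does not prove the stated theorem. The chain $(a)\Rightarrow(b)\Rightarrow(c)\Rightarrow(b)$ is not a cycle: it establishes $(a)\Rightarrow(b)$ and $(b)\Leftrightarrow(c)$, but no implication in your argument has $(a)$ as its conclusion, so the three-way equivalence is not established. (For what it is worth, the paper's own proof is the single line ``Directly from definitions,'' and the only direction actually used later, in Corollary \ref{nonamen}, is $(a)\Rightarrow(c)$; your write-up of $(a)\Rightarrow(b)\Rightarrow(c)$ and of $(c)\Rightarrow(b)$ is a correct and more informative version of that. But as a proof of the lemma as stated, it is incomplete.)

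The missing direction is not a formality, and you should flag rather than paper over it. To get from $(c)$ (or $(b)$) back to $(a)$ you would need two things that do not follow from $G$ being a Dehn monster: first, that the specific relator set $R$ is computably enumerable --- recursive presentability only gives that $\ker\eta=\langle\langle R\rangle\rangle$ is c.e., and a non-c.e.\ subset of the kernel can perfectly well have c.e.\ normal closure; second, that for every infinite c.e.\ $B$ some witness $uv^{-1}$ lands in $R$ \emph{itself}, whereas algorithmic finiteness only places $uv^{-1}$ in the normal closure. Condition (c) of Definition \ref{simple} for $R$ is strictly stronger than for $\langle\langle R\rangle\rangle$. So either the equivalence must be read with the implicit standing hypothesis that one compares $(a)$ only against presentations where it has a chance of holding, or the honest statement is $(a)\Rightarrow(b)\Leftrightarrow(c)$ --- which is exactly what your argument proves. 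Either way, your proposal should not describe itself as closing a cycle through all three conditions.
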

 \begin{proof}
 Directly from definitions.
 \end{proof}

  It is not hard to construct  standard simple sets  of words in $F(X)$ (see, for example, \cite{Rogers,Cooper}), but it is much harder to construct simple sets of relations.   Below we construct such a set of relations  $R$ which satisfies the Golod-Shafarevich condition. It looks a bit counter intuitive since sets satisfying the Golod-Shafarevich condition are supposed to be "sparse", and simple sets are usually viewed as "large".

\begin{thm} \label{th:simple}
For every Golod-Shafarevich group $\langle X \mid S\rangle$  there exists a  simple set of relations $R \subseteq F(X)$ such that the quotient $\langle X \mid S \cup R\rangle$ is again Golod-Shafarevich.
\end{thm}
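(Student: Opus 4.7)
The plan is to construct $R$ via dovetailing in the spirit of Post's simple-set construction, with a degree-control mechanism calibrated to the Golod-Shafarevich inequality. Fix a standard enumeration $W_0, W_1, \ldots$ of all computably enumerable subsets of $F(X)$ and a point $t_0\in(0,1)$ with $\varepsilon:=-H_p(X,S,t_0)>0$. Pick integers $2\le n_0<n_1<\cdots$ so that $\sum_{e\ge 0} t_0^{n_e}<\varepsilon$ (for instance, $n_e=N_0+e$ for large $N_0$).

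For each $e$, I run in parallel the following subprocedure: enumerate $W_e$ stage by stage, and at each stage test, for every pair of distinct words $u,v$ currently enumerated, whether $uv^{-1}\in D_{n_e}F(X)$. This test is decidable because $F(X)/D_{n_e}F(X)$ is a finite $p$-group which can be computed via the Magnus embedding reduced modulo $I^{n_e}$. When such a pair $(u_e,v_e)$ is found, output $r_e:=u_ev_e^{-1}$ and terminate the subprocedure for $e$. Let $R$ be the c.e.\ set of all $r_e$ produced.

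The crucial observation is the pigeonhole step: if $W_e$ is infinite then, since $F(X)/D_{n_e}F(X)$ is finite, two distinct words of $W_e$ must eventually fall into the same coset of $D_{n_e}F(X)$, so the subprocedure terminates and $\deg(r_e)\ge n_e$. Hence condition (c) of Definition \ref{simple} is immediate: every infinite c.e.\ set equals some $W_e$ and so contains the distinct pair $u_e,v_e$ with $u_ev_e^{-1}=r_e\in R$, while condition (a) is clear from the construction. Because every $r_e$ has degree $\ge n_e \ge 2$, no new relator of degree one is introduced, so $\hat d$ is unaffected when passing from $S$ to $S\cup R$, and
\begin{align*}
H_p(X, S\cup R, t_0) \;\le\; H_p(X, S, t_0) + \sum_{e\ge 0} t_0^{\deg r_e} \;\le\; -\varepsilon + \sum_{e\ge 0} t_0^{n_e} \;<\; 0.
\end{align*}
Thus $\langle X\mid S\cup R\rangle$ is Golod-Shafarevich, hence infinite by Theorem \ref{GSinf}, and its cover $\langle X\mid R\rangle$ is a fortiori infinite, verifying (b).

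The main obstacle is the apparent tension between requirement (c), which forces $R$ to intersect every infinite c.e.\ set in the prescribed way, and the Golod-Shafarevich inequality, which tightly bounds the total mass of relators. The resolution hinges on the finiteness of the Zassenhaus quotients: one may always locate the required relator of \emph{arbitrarily high} prescribed degree inside any infinite c.e.\ set, and a relator of degree $n$ contributes only $t_0^n$ at $t_0<1$. With $n_e\to\infty$ fast enough the total added weight is summable below $\varepsilon$, so no priority argument is needed.
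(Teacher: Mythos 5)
Your proposal is correct and follows essentially the same route as the paper: for each c.e.\ set $W_e$ you search for a pair of distinct words collapsing in a deep enough finite Zassenhaus quotient (the paper takes $n_e=e+k(\varepsilon)$ where your $n_e=N_0+e$), pigeonhole guarantees success when $W_e$ is infinite, and the resulting degree lower bound makes the added contribution to $H_p$ summable below $\varepsilon$. Your direct bound $\sum_e t_0^{n_e}$ is a slightly cleaner accounting than the paper's $\sum_i (i-k(\varepsilon))t_0^i$, but the argument is the same.
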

\begin{proof}
  Let $\langle X \mid S\rangle$ be a Golod-Shafarevich presentation and $t_0 \in (0,1)$  such that  $H_p(X,S,t_0) < 0$. Given a number  $\varepsilon = |H_p(X,S,t_0)|$  we construct a simple set of relations $R$   using  a ``forcing-like" argument.

To explain our construction we need  to introduce some notation and elementary  facts from computability theory. Details can be found in any standard book on computability theory, for example, in \cite{Rogers,Cooper}.

Let $u_0, u_1,u_2, u_3, \ldots, $ be an effective bijective enumeration of all elements in $F(X)$ which preserves the length, i.e., $|u_i| \leq |u_j|$ if $i \leq j$. For example, we start with the empty word and then enumerate all words of length 1, then all words of length 2, etc. This allows us, if needed,  to identify $u_i$ with its index $i$, so computably enumerable subsets of $F(X)$ are precisely the computably subsets of $\mathbb{N}$. Furthermore, let $\tau: \mathbb{N}  \times \mathbb{N}   \to \mathbb{N}$ be a fixed computable bijection (a paring function), for example $$\tau(x,y) = \frac{1}{2}(x^2 +2xy +y^3 +3x+y)$$ (see \cite{Rogers}),   and $\pi_1, \pi_2 :\mathbb{N} \to \mathbb{N}$ the computable functions that yield the inverse mapping $\tau^{-1}$, i.e., $\tau(\pi_1(z),\pi_2(z)) = z$ for every $z \in \mathbb{N}$. The pairing function $\tau$ allows one to identify computably enumerable sets of pairs $(i,j) \in \mathbb{N} \times \mathbb{N}$ with computably enumerable sets of their images $\tau(i,j) \in \mathbb{N}$.

Let  $P_1, P_2, P_3 \ldots, $ be an effective enumeration of all Turing machines (finite programs) in the alphabet $\{0,1\}$. Denote by $U(x,y)$  a universal computable function in two variables such that for every $e \in \mathbb{N}$ the function $\phi_e(x) = U(x,e)$ is a partial computable function $\phi_e : \mathbb{N} \to \mathbb{N}$ computable by the Turing machine $P_e$.  If $W_e = dom(\phi_e )$, then $W_0,W_1, W_2, \ldots$ is a list of all computably enumerable subsets of $\mathbb{N}$, as well as of $F(X)$ and $F(X) \times F(X)$ (via the corresponding identifications). In fact, every computably enumerable set occurs infinitely many times in this list.

For $e,s,x \in \mathbb{N}$ define a partial function  $\phi_{e,s}(x): \mathbb{N} \to \mathbb{N}$ such that $\phi_{e,s}(x) = y$ if and only if the following two conditions are satisfied:
\begin{itemize}
\item $ e, x,y < s$
\item $ \phi_{e}(x) = y$   in  less  then $s$  steps  of   computation  by $P_e$ on $x$, i.e., $P_e$ starts on the input $x$ and then halts and outputs $y$ in less than $s$ steps of computation.
 \end{itemize}
 The function  $\phi_{e,s}$, as well as its domain $W_{e,s} = dom(\phi_{e,s})$,  is computable, i.e., given $x,e,s$ one can effectively verify if $\phi_{e,s}(x)$ is defined or not, and if it is defined then compute the value $\phi_{e,s}(x)$. Indeed, given $x,e,s$ one writes down the Turing machine program $P_e$, starts the computation of $P_e$ on $x$ and waits for at most $s$ steps to see if the computation halts or not. If it halts, the function $\phi_{e,s}(x)$ is defined and its value is written on the tape. Otherwise, $\phi_{e,s}(x)$ is not defined.

For every set $W_e$ put
$$
W_e^\prime = \{(u,v) \mid u,v \in W_e, u \neq v \ in \ F(X)\}.
$$
Obviously, $W_e^\prime$ is also computably enumerable.

We construct a set of relations  $R \subseteq F(X) $ by computably enumerating its elements in stages $0, 1, 2,  \ldots, s, \ldots, $ in such a way that the following requirements are satisfied for each $e \in \mathbb{N}$ (below $k(\varepsilon)$ is a fixed natural number which depends on $\varepsilon$, it will be specified  a bit later):
\begin{enumerate}
\item[({\bf L}$_e$)] $|\{r \in R \mid deg(r) \leq e \}| \leq \max \{e-k(\varepsilon),0\}$.

\item[({\bf M}$_e$)] If  $W_e$ is infinite, then $W_e^\prime \cap R \neq \emptyset.$
\end{enumerate}
Observe, that if all the conditions $({\bf L}_e)$, $e \in \mathbb{N},$ are satisfied then the following holds for the presentation $\langle X \mid S \cup R\rangle$:
$$
\begin{array}{rcl}
H(X,S \cup R,t_0) & = & H(X,S \cup R,t_0)   + \sum\limits_{i = 1}^\infty n_i(R)t_0^i \\&&\\ &\le &  H(X,S,t_0) + \sum\limits_{i = k(\varepsilon)}^\infty (i- k(\varepsilon))t_0^i \\&& \\ &=& H(X,S,t_0) + t_0^{k(\varepsilon) +1}(1+2t_0 +3t_0^2 + 4t_0^3 \ldots) \\&&\\ &=&
H(X,S,t_0) + \frac{t_0^{k(\varepsilon)+1}}{(1-t_0)^2}.
\end{array}
$$
Hence, if $k(\varepsilon)$ is chosen to satisfy
$$
\frac{t_0^{k(\varepsilon)+1}}{(1-t_0)^2} < |H_p(X,S,t_0)|,
$$
then $H(X,S \cup R,t_0) <0$ and the presentation $\langle X \mid S \cup R\rangle$
is Golod-Shafarevich. In particular,  the group $G = \langle X \mid S \cup R\rangle$ is infinite. Now we fix an arbitrary natural number $k(\varepsilon)$ satisfying the condition above. The conditions ({\bf M}$_e$), $e \in \mathbb{N}$,  ensure that $R$ (hence $S \cup R$) satisfies the condition (c) from Definition \ref{simple}.

Now we describe an algorithm $A$ that enumerates a subset $R \subseteq F(X)$ which satisfies all the conditions ({\bf L}$_e$) and ({\bf M}$_e$).

\begin{itemize}
\item At  each stage $s$ for each as yet unsatisfied condition ({\bf M}$_e$), $e < s$,  $A$ looks  for two  numbers $x_1, x_2  \in W_{e,s}$ such that the  corresponding words $ u_{x_1}, u_{x_2}\in F(X)$ are distinct,  but their images are  equal in the quotient group $F/D_{e+k(\varepsilon)}F$.  Since the Magnus embedding is effective, the quotient groups group  $F/D_{i}F$ have uniformly solvable Equality Problem. Thus $A$ can effectively either  find such a pair  $x_1, x_2$ or conclude that there is no such a pair at this stage for a given $e < s$.

\medskip

  \item If such a pair $x_1, x_2$ for an unsatisfiable $M_e$, $e < s$ occurs at the stage $s$, the algorithm includes the pair $u_{x_1}u_{x_2}^{-1} $  into $R$, and marks ({\bf M}$_e$) as now satisfied.

\medskip

  \item When the stage $s$ is finished (note that there are at most $s$ unsatisfied conditions ({\bf M}$_e$) with $e <s$ to check at this stage) the algorithm goes to the stage $s+1$.
  \end{itemize}

Observe that if $W_e$ is infinite then there are some pairs $ u_{x_1}, u_{x_2}$ of distinct words in $W_e$ that define the same element in the finite quotient $F/D_{e+k(\varepsilon)}F$. Hence, for one of such pairs  the word $u_{x_1}u_{x_2}^{-1} $ would be enumerated into $R$ at some stage $s$. Thus the set $R \subseteq F(X)$ produced by $A$ satisfies all conditions ({\bf M}$_e$), $e \in \mathbb{N}$.

To see that all the conditions ({\bf L}$_e$) are satisfied observe that if some relation  $u_{x_1}u_{x_2}^{-1}$ was added to $R$ at some stage for a given set $W_e$ then $u_{x_1} = u_{x_2}$ in $F/D_{e+k(\varepsilon)}F$, so $deg(u_{x_1} u_{x_2}^{-1}) \geq e+k(\varepsilon)$. This shows that if $uv^{-1} \in R$ and $deg(uv^{-1}) = i$ then $u=v \in W_{j}$ for some $j \leq i-k(\varepsilon)$, so
$|\{r \in R \mid deg(r) \leq i \}| \leq \max \{i-k(\varepsilon),0\},$
as claimed.

It follows  that the presentation   $G = \langle X \mid S \cup R\rangle$ is Golod-Shafarevich and the set of relations  $R$ is a simple computably enumerable set of relations, as required.
\end{proof}

Theorem \ref{main} is a simplification of the following.

\begin{coro}\label{nonamen}
There exists a recursively presented non-amenable algorithmically finite group.
\end{coro}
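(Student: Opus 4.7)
My plan is to observe that the Dehn monster produced by the proof of Theorem \ref{main} already carries extra structural information beyond being infinite, namely it is itself defined by a Golod--Shafarevich presentation, and to exploit this extra structure to deduce non-amenability. Concretely, the proof of Theorem \ref{main} (through Lemma \ref{le:simpe-relations} and Theorem \ref{th:simple}) starts from an arbitrary Golod--Shafarevich presentation $\langle X\mid S\rangle$, attaches a computably enumerable simple set of relations $R$, and shows that the presentation $\langle X\mid S\cup R\rangle$ remains Golod--Shafarevich. For Corollary \ref{nonamen} I would simply be a bit more deliberate about the starting data: choose $X$ and $S$ (for example $|X|\ge 2$, $S=\emptyset$) so that $\langle X\mid S\rangle$ trivially satisfies the Golod--Shafarevich inequality with plenty of slack. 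The output $G=\langle X\mid S\cup R\rangle$ is then a Dehn monster by Theorem \ref{main}, and is recursively presented because $S\cup R$ is computably enumerable.

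The remaining step is to turn the Golod--Shafarevich condition on the final presentation into non-amenability of $G$. For this I would invoke the known result (due to Ershov, with subsequent refinements by Ershov--Jaikin-Zapirain) that every abstract group admitting a Golod--Shafarevich presentation with respect to some prime $p$ is non-amenable. Applied to $\langle X\mid S\cup R\rangle$, this gives non-amenability of $G$ directly, and combined with the Dehn monster property it yields exactly the conclusion of Corollary \ref{nonamen}.

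The main (conceptual) obstacle is precisely why Theorem \ref{th:simple} is phrased the way it is: adding relations to a non-amenable group can destroy non-amenability, so one cannot just begin with a non-amenable Golod--Shafarevich group, impose the simple relations $R$ from Theorem \ref{th:simple}, and blindly hope that non-amenability persists. What saves the argument is that Theorem \ref{th:simple} preserves the Golod--Shafarevich property, and non-amenability is a consequence of the Golod--Shafarevich inequality itself, not of any ad hoc property of the initial group $\langle X\mid S\rangle$. Thus the corollary is essentially free once one commits to citing the Ershov non-amenability theorem; the only real work is to track the Golod--Shafarevich inequality through the construction, which has already been done inside the proof of Theorem \ref{th:simple}.
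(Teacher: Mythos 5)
Your proposal is correct and follows exactly the paper's own argument: take a Golod--Shafarevich presentation (e.g.\ $S=\emptyset$), apply Theorem \ref{th:simple} to obtain a simple set of relations $R$ with $\langle X\mid S\cup R\rangle$ still Golod--Shafarevich, conclude algorithmic finiteness via Lemma \ref{le:simpe-relations}, and get non-amenability from the Ershov--Jaikin-Zapirain theorem that all Golod--Shafarevich groups are non-amenable. No substantive differences.
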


\begin{proof}
Let $\langle X \mid S\rangle$ be a Golod-Shafarevich presentation  with  a computably enumerable set of relations $S$. For instance, one can take $S = \emptyset$.  By Theorem \ref{th:simple}  there exists a simple set of relations $R$ such that $\langle X \mid S \cup R\rangle$ is a Golod-Shafarevich presentation. By Lemma \ref{le:simpe-relations} the group $G = \langle X \mid S \cup R\rangle$ is algorithmically finite. Observe,  that all Golod-Shafarevich groups are non-amenable \cite{EJ}. Hence the result.
\end{proof}

\begin{coro}
If $G = \langle X \mid S\rangle$ is a recursively presented Golod-Shafarevich group satisfying some group-theoretic property $Q$ which is preserved under quotients then there is a recursively  presented Golod-Shafarevich quotient of $G$ which is a  Dehn monster  satisfying $Q$.
\end{coro}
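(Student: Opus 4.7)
The plan is to derive this corollary as a direct packaging of Theorem \ref{th:simple}, verifying in turn each clause of the conclusion. First I would apply Theorem \ref{th:simple} to the given Golod--Shafarevich presentation $\langle X \mid S\rangle$ to produce a simple set of relations $R\subseteq F(X)$ for which the combined presentation $\langle X \mid S\cup R\rangle$ is still Golod--Shafarevich. Let $H=\langle X\mid S\cup R\rangle$; this is by construction a quotient of $G$.

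Next I would check the four conditions required of $H$ one by one. (i) $H$ is recursively presented, since $S$ is computably enumerable by assumption and $R$ is computably enumerable by construction in Theorem \ref{th:simple}, so $S\cup R$ is computably enumerable. (ii) $H$ is infinite, by Theorem \ref{GSinf} applied to the Golod--Shafarevich presentation $\langle X\mid S\cup R\rangle$. (iii) $H$ is algorithmically finite: the argument of Lemma \ref{le:simpe-relations} uses only condition (c) of Definition \ref{simple} for $R$, and that condition passes to any further quotient, so for every infinite computably enumerable $B\subseteq F(X)$ there are distinct $u,v\in B$ with $uv^{-1}\in R\subseteq \langle\langle S\cup R\rangle\rangle$, hence $\eta(u)=\eta(v)$ in $H$; by Lemma \ref{af}(b) this gives algorithmic finiteness of $H$. (iv) $H$ satisfies $Q$, because $H$ is a quotient of $G$, $G$ satisfies $Q$, and $Q$ is preserved under quotients.

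There is essentially no obstacle: the corollary is a reorganization of the proof of Corollary \ref{nonamen}, with the observation that Theorem \ref{th:simple} is flexible enough to accept an arbitrary computably enumerable starting set $S$ of relations, not just $S=\emptyset$. The only point requiring mild care is the distinction between ``$R$ is a simple set of relations'' and ``$\langle X\mid S\cup R\rangle$ is a Dehn monster''. As noted above, the implication goes through verbatim because condition (c) of Definition \ref{simple} is a statement about $R$ inside $F(X)$ and survives enlarging the set of relations; combined with infiniteness from Golod--Shafarevich and recursive presentability from computable enumerability of $S\cup R$, this gives the Dehn-monster property of $H$, and preservation of $Q$ under quotients finishes the proof.
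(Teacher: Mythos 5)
Your proposal is correct and follows exactly the route the paper intends: the corollary is stated without proof precisely because it is the argument of Corollary \ref{nonamen} (apply Theorem \ref{th:simple} to the given $\langle X\mid S\rangle$, invoke Lemma \ref{le:simpe-relations} for the Dehn-monster property, and use preservation of $Q$ under quotients). Your extra care about condition (c) of Definition \ref{simple} surviving the enlargement from $R$ to $S\cup R$ matches the paper's own remark in the proof of Theorem \ref{th:simple} that the conditions $({\bf M}_e)$ ensure $R$, hence $S\cup R$, satisfies that condition.
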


\begin{coro} \label{co:quotients}
There are recursively presented Golod-Shafarevich groups satisfying Kazhdan property $(T)$ with simple sets of defining relations.
\end{coro}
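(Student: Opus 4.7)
The plan is to apply the preceding Corollary (the quotient corollary) to a known Golod-Shafarevich group with Kazhdan property $(T)$. The two inputs needed are (i) the existence of a recursively presented (indeed, finitely presented) Golod-Shafarevich group $G_0 = \langle X \mid S\rangle$ with property $(T)$, and (ii) the fact that property $(T)$ is preserved under passing to quotients.

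For (i), I would cite Ershov's construction (in his work ``Golod-Shafarevich groups with property $(T)$''), which produces finitely presented groups that simultaneously satisfy the Golod-Shafarevich inequality and Kazhdan's property $(T)$. Being finitely presented, such a $G_0$ is in particular recursively presented, so it fits the hypothesis of the previous corollary. Statement (ii) is a standard fact: if $G_0$ has property $(T)$ and $N \triangleleft G_0$, then any unitary representation of $G_0/N$ pulls back to a unitary representation of $G_0$ with invariant vectors detected by the same Kazhdan constants, so $G_0/N$ inherits property $(T)$.

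With these two ingredients, I would invoke the previous corollary with $Q = $ property $(T)$. This yields a recursively presented Golod-Shafarevich quotient $G = \langle X \mid S \cup R\rangle$ of $G_0$ which is a Dehn monster and still has property $(T)$. Unpacking the proof of that corollary, the set $R$ produced is exactly the simple set of relations furnished by Theorem \ref{th:simple}, so by Lemma \ref{le:simpe-relations} the full defining set $S \cup R$ (equivalently, its normal closure) is a simple set of relations for $G$. This gives a group with all the required properties simultaneously.

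The only genuinely nontrivial step is (i), the existence of a Golod-Shafarevich group with property $(T)$, since this is a deep theorem of Ershov rather than something internal to this paper; the rest is a bookkeeping exercise that follows immediately from Theorem \ref{th:simple}, Lemma \ref{le:simpe-relations}, and the preceding corollary. I would not attempt to reprove Ershov's theorem here, only to quote it cleanly and check that the group it produces satisfies the hypotheses of the quotient corollary (recursive presentation, Golod-Shafarevich, and $Q$ preserved under quotients).
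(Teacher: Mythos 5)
Your proposal is correct and follows exactly the paper's own argument: cite Ershov's finitely presented Golod--Shafarevich groups with property $(T)$, note that $(T)$ passes to quotients, and apply the preceding corollary (whose proof runs through Theorem \ref{th:simple} and Lemma \ref{le:simpe-relations} to produce the simple set of defining relations). Your additional unpacking of why the defining set is simple is a harmless elaboration of what the paper leaves implicit.
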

\begin{proof}
It is known that there are finitely presented Golod-Shafarevich groups with the property $(T)$  (see \cite{Ersh}, \cite{EJ}). Furthermore, the property $T$ is preserved under taking quotients. Hence the result.
\end{proof}

\section{Algorithmic and algebraic properties of Dehn monsters}

\subsection{Subgroups and quotients of algorithmically finite groups}
Let us list some elementary properties of algorithmically finite groups. Recall that a {\it section} of a group $G$ is a quotient group of a subgroup of $G$.

\begin{prop}\label{properties}
Let $G$ be an algorithmically finite group. Then the following hold.
\begin{enumerate}
\item[(a)] Every finitely generated section of $G$ is algorithmically finite.
\item[(b)] EP is undecidable in every finitely generated infinite section of $G$.
\item[(c)] $G$ is a torsion group.
\end{enumerate}
\end{prop}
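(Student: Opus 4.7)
The plan is to establish (a) as the main content and derive (b) and (c) as quick corollaries. For (a), let $Q = H/N$ be a finitely generated section of $G$, fix a finite generating set $Y = \{y_1,\ldots,y_k\}$ of $Q$, and lift each $y_i$ to an element $h_i \in H$; pick a word $w_i \in F(X)$ representing $h_i$ in $G$. The substitution map $\sigma : F(Y) \to F(X)$ defined by $y_i \mapsto w_i$ is then effectively computable, and $\eta_G \circ \sigma = \phi$, where $\phi : F(Y) \to H \leq G$ is the homomorphism sending $y_i$ to $h_i$. Writing $\pi : H \to Q$ for the canonical projection, we have $\eta_Q = \pi \circ \phi$. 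I would now argue by contrapositive: suppose some infinite computably enumerable $W \subseteq F(Y)$ has pairwise distinct images in $Q$ under $\eta_Q$. Then $W^\ast := \sigma(W) \subseteq F(X)$ is computably enumerable, and since $\eta_G(\sigma(u)) = \eta_G(\sigma(v))$ forces $\pi(\phi(u)) = \pi(\phi(v))$, injectivity of $\eta_Q|_W$ propagates to injectivity of $\phi|_W$, so $\eta_G(W^\ast) = \phi(W)$ is infinite, and in particular $W^\ast$ is infinite. Applying Lemma \ref{af}(b) to $G$ produces distinct words $a, b \in W^\ast$ with $\eta_G(a) = \eta_G(b)$; any preimages $u, v \in W$ with $\sigma(u) = a$, $\sigma(v) = b$ must be distinct (since $\sigma$ is a function and $a \neq b$), and they satisfy $\eta_Q(u) = \eta_Q(v)$, contradicting pairwise distinctness. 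Hence $Q$ is algorithmically finite.

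Parts (b) and (c) now fall out almost for free. For (b), any finitely generated infinite section $Q$ is algorithmically finite by (a); if EP were decidable in $Q$ with respect to some finite generating set $Y$, then EP would be decidable on the computably enumerable set $F(Y)$ itself, and Lemma \ref{af}(c) would force $\eta_Q(F(Y)) = Q$ to be finite, a contradiction. For (c), if an element $g \in G$ had infinite order, then $\langle g \rangle \cong \mathbb{Z}$ would be a finitely generated infinite section of $G$ whose EP is trivially decidable, contradicting (b); hence every element of $G$ has finite order.

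The only real content is the substitution bookkeeping in (a); the one subtle point is that $\sigma$ need not be injective on $W$ as a map on words, so I have to be careful when passing between $W$ and $W^\ast$. But the only thing needed is that distinct $\sigma$-images have distinct preimages, which is automatic from $\sigma$ being a function, and this is exactly what lets the final contradiction close. Everything else is a direct appeal to the three equivalent formulations of algorithmic finiteness collected in Lemma \ref{af}.
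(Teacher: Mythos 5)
Your proposal is correct and follows essentially the same route as the paper: for (a) you argue by contrapositive, substituting words over $X$ for the generators of the section and pulling an identification in $G$ back to one in $Q$ via Lemma \ref{af}, and you then deduce (b) from (a) together with Lemma \ref{af}(c) and (c) from (b) via infinite cyclic subgroups, exactly as the authors do. The extra bookkeeping you supply about $\sigma$ not being injective on words is a harmless elaboration of the same argument.
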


\begin{proof}
Let  $G = \langle X \mid R\rangle$ be algorithmically finite. Let $H$ be a subgroup of $G$ generated by $Y=\{ y_1, \ldots , y_k\}\subseteq G$. If $H$ maps onto a group $Q$ that is not algorithmically finite, then there exists a recursive subset $W\subseteq F(Y)$ such that all elements of $W$ are pairwise distinct in $Q$. Hence all elements of $W$ are pairwise distinct in $H$. For every $y_i\in Y$ we fix a word $u_i\in F(X)$ such that $y_i=u_i$ in $G$. Replacing every $y_i$ with $u_i$ in all words from $W$, we obtain a computably enumerable subset $W^{\prime } \subseteq F(X)$ such that all elements of $W^\prime $ are pairwise distinct in $G$. Property (b) follows from (a) and Lemma \ref{af}.  Finally (c) follows from (b). Indeed since the word problem in cyclic groups is decidable, there are no infinite cyclic subgroups in $G$.
\end{proof}

\begin{coro}
There exist infinite residually finite algorithmically finite groups.
\end{coro}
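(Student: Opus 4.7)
The plan is to take the algorithmically finite Golod--Shafarevich group $G = \langle X \mid S \cup R \rangle$ produced by Corollary \ref{nonamen} and pass to its maximal pro-$p$ residual quotient. Concretely, let $K = \bigcap_{n \geq 1} D_n G$ be the intersection of the (images in $G$ of the) Zassenhaus $p$-series, and set $\bar G = G/K$. I claim $\bar G$ is residually finite, infinite, and algorithmically finite.

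Residual finiteness of $\bar G$ is built into the construction: $\bar G$ is canonically isomorphic to the image of $G$ in its pro-$p$ completion $G_{\hat p}$, and every non-trivial element of this image is separated from the identity by some finite $p$-group quotient of $G_{\hat p}$. Hence $\bar G$ is in fact residually a finite $p$-group, and in particular residually finite.

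Infiniteness of $\bar G$ follows from Theorem \ref{GSinf}: because $G$ admits a Golod--Shafarevich presentation, $G_{\hat p}$ is infinite. The image $\bar G$ is dense in the Hausdorff topological group $G_{\hat p}$, so if $\bar G$ were finite it would coincide with its closure $G_{\hat p}$, contradicting infiniteness of the latter.

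Algorithmic finiteness of $\bar G$ is immediate from Proposition \ref{properties}(a), since $\bar G$ is a finitely generated quotient (hence a section) of the algorithmically finite group $G$. I do not foresee a genuine obstacle: the three required properties assemble directly from ingredients already developed in the paper. The only mildly delicate point is noting that Proposition \ref{properties}(a) applies even though the quotient $\bar G$ is a priori no longer recursively presentable, which is all that we need in order to transport the algorithmic finiteness of $G$ down to $\bar G$.
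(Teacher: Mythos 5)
Your argument is correct and is essentially the paper's own proof: the paper likewise takes an algorithmically finite Golod--Shafarevich group and passes to its image in the pro-$p$ completion, which is infinite by Theorem \ref{GSinf}, residually (finite-$p$), and algorithmically finite as a finitely generated quotient via Proposition \ref{properties}(a). You merely spell out two details the paper leaves implicit --- the density argument showing the image is infinite, and the observation that Proposition \ref{properties}(a) needs no recursive presentability of the quotient --- both of which are accurate.
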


\begin{proof}
Let $G$ be an algorithmically finite Golod-Sjhafarevich group. Then its image in the pro-$p$ completion is infinite by Theorem \ref{GSinf}, residually finite, and algorithmically finite by part (a) of the above proposition.
\end{proof}

It was proved by Maltsev that the word problem is decidable in every residually finite finitely presented group (see \cite{Miller}). Thus the groups constructed in the corollary above are necessarily infinitely presented. We do not know if they can be made recursively presented.

\begin{prob}
Does there exist a residually finite Dehn Monster?
\end{prob}

We also observe that every elementary amenable algorithmically finite group is finite by part (c) of Proposition \ref{properties} and the main result of \cite{Cho}. This motivates another problem.

\begin{prob}
Does there exist an infinite amenable algorithmically finite group (or, better, an amenable Dehn Monster)?
\end{prob}

\subsection{What is decidable in every recursively presented group?}

Let $\langle X \mid R\rangle$
be a presentation with finite set of generators.  Denote by $N$ the normal closure of $R$ in $F(X)$.  For elements $u_1, \ldots, u_k \in F(X)$ by $Cos(u_1, \ldots, u_k)$ we denote the union of cosets $u_1N \cup \ldots \cup u_kN$.

\begin{lemm}
Let $G = \langle X \mid R\rangle$
be a finitely generated recursively presented group.Then for any $u_1, \ldots, u_k \in F(X)$ the following holds:
\begin{enumerate}
\item[(a)] The set $Cos(u_1, \ldots, u_k)$  is an infinite computably enumerable subset of $F(X)$;
\item[(b)] The Equality Problem is decidable in $Cos(u_1, \ldots, u_k)$.
\item[(c)] The Word Problem is decidable in $Cos(u_1, \ldots, u_k)$.
\end{enumerate}
\end{lemm}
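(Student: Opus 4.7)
The strategy is to exploit two facts: first, since $R$ is computably enumerable, so is the normal closure $N=\langle\langle R\rangle\rangle$; second, on the union of cosets $u_1N\cup\cdots\cup u_kN$, deciding equality in $G$ depends only on which coset each input lies in, and for fixed representatives $u_1,\ldots,u_k$ this is a finite amount of data. For part (a), I would enumerate $N$ as the set of freely reduced products $g_1r_1^{\varepsilon_1}g_1^{-1}\cdots g_m r_m^{\varepsilon_m}g_m^{-1}$ with $r_i\in R$, $g_i\in F(X)$, $\varepsilon_i\in\{\pm 1\}$; this is c.e.\ because $R$ is. Then each $u_iN$ is c.e.\ as the computable image of $N$ under left multiplication by $u_i$, and a finite union of c.e.\ sets is c.e. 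Infiniteness of $Cos(u_1,\ldots,u_k)$ reduces to the elementary fact that every nontrivial subgroup of a free group is infinite; the only degenerate case is $N=\{1\}$ (i.e., $G$ itself free), in which (b) and (c) are trivially true on all of $F(X)$.

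For part (b), the crucial identity is that if $v_1=u_in_1$ and $v_2=u_jn_2$ with $n_1,n_2\in N$, then by normality of $N$,
\[
v_1 v_2^{-1}=u_iu_j^{-1}\cdot\bigl(u_jn_1 n_2^{-1}u_j^{-1}\bigr),
\]
and the bracketed factor lies in $N$. Hence $v_1=v_2$ in $G$ if and only if $u_iu_j^{-1}\in N$, so equality on $Cos(u_1,\ldots,u_k)$ depends only on which coset each input belongs to. The decision algorithm therefore proceeds in two steps: given $v\in Cos(u_1,\ldots,u_k)$, enumerate $N$ as $n_1,n_2,\ldots$ and search for a pair $(s,i)$ with $v=u_i n_s$ as freely reduced words in $F(X)$; this search is guaranteed to terminate because $v$ lies in some $u_iN$ by hypothesis. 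Then consult a fixed table recording the truth values of ``$u_iu_j^{-1}\in N$?'' for $1\le i,j\le k$, and output the answer accordingly. Note that if several distinct cosets share a common element this forces $u_iu_j^{-1}\in N$, so the reported answer does not depend on which index $i$ the search happens to return.

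Part (c) is then an immediate specialization: given $v\in Cos(u_1,\ldots,u_k)$, apply the semidecision from (b) to find an index $i$ with $v\in u_iN$, then read off the truth value of ``$u_i\in N$?'' from a fixed table of $k$ bits. The main conceptual subtlety, and the only step that might look suspicious, is that these two lookup tables are finite pieces of data on which the decision algorithm is allowed to depend non-uniformly in the fixed parameters $u_1,\ldots,u_k$: we are not asked to compute the tables from the $u_i$'s, which would of course be impossible in general since the word problem in $G$ may well be undecidable. What is asserted is existence of a decision procedure for each fixed tuple of parameters, and this the argument above delivers.
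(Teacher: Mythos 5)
Your proposal is correct and takes essentially the same approach as the paper: enumerate the normal closure $N$ to semi-decide which coset $u_iN$ the input lies in (a search guaranteed to terminate on inputs from $Cos(u_1,\ldots,u_k)$), then answer by consulting a finite, non-uniformly fixed piece of data. The paper packages that data as a fixed choice of representatives $\bar{u}_i$ rather than as tables of the truth values of ``$u_iu_j^{-1}\in N$'' and ``$u_i\in N$'', but this is the same device, and your explicit remark on non-uniformity matches the paper's own remark following the lemma.
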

\begin{proof}
Notice that $N$, the normal closure of $R$,  is computably enumerable since $R$ is computably enumerable.  Hence,  $Cos(u_1, \ldots, u_k)$  as a finite union of cosets of $N$ is also an infinite computably enumerable subset of $F(X)$.
To see that the Word and Equality Problems are   decidable  in $Cos(u_1, \ldots, u_k)$  for each element $\eta(u_i) \in G$ fix a representative $\bar{u}_i \in F(X)$ such that $\eta(u_i) = \eta(\bar{u}_i)$ and if $\eta(u_i) = 1$ then $\bar{u}_i = \emptyset$.  Now,  given a word $w \in Cos(u_1, \ldots, u_k)$  compute the reduced forms of  the  words $bar{u}_1w^{-1}, \ldots bar{u}_kw^{-1}$ and start enumeration procedure for $N$ until one of the reduced words above appear in the enumeration process. The procedure will eventually stop and find  an element $bar{u}_k$ such that $w = bar{u}_k$ in $G$. Decidability of the Word and Equality Problems in $Cos(u_1, \ldots, u_k)$ follows.
\end{proof}

\begin{rema}
Of course, the decision algorithm above for $Cos(u_1, \ldots, u_k)$ is not uniform in $u_1, \ldots, u_k$ since we do not know how to choose the required representatives for arbitrary given words $u_1, \ldots, u_k$. So the algorithm depends on the given $u_1, \ldots, u_k$.
\end{rema}

\subsection{What is decidable in a Dehn monster?}

In this section we show that in a Dehn monster  EP is decidable only on the trivial sets of inputs.

\begin{thm} \label{th:Equality}
Let $G = \langle X \mid R\rangle$ be an infinite finitely generated recursively presented algorithmically finite group. Then the following hold:

\begin{itemize}
\item [(a)] Every computably enumerable subset $W \subseteq F(X)$ with decidable Equality  Problem in $G$ is contained in the set $Cos(u_1, \ldots,u_k)$ for some $u_1, \ldots,u_k \in F$.
\item [(b)] Every computably enumerable subset $W \subseteq F(X)$ with decidable Equality  Problem  in $G$ is negligible, i.e., $\lim\limits_{k\to \infty}\rho_k (W)= 0$. Moreover, if $G$ is non-amenable, then $W$ is strongly negligible, i.e., there exists $t>1$ such that $\rho_n(S) = O(t^{-n})$ as $n\to \infty$.
\end{itemize}
\end{thm}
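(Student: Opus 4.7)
Part (a) is an immediate consequence of Lemma \ref{af}. Since EP is decidable on the computably enumerable set $W$, condition (c) of that lemma forces $\eta(W)$ to be finite, say $\eta(W) = \{\eta(u_1), \dots, \eta(u_k)\}$ for some $u_1, \dots, u_k \in F(X)$. Then $W \subseteq \bigcup_{i=1}^k u_iN = Cos(u_1, \dots, u_k)$, as claimed.

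For part (b), the subadditivity of $\rho_n$ yields $\rho_n(W) \leq \sum_{i=1}^k \rho_n(u_iN)$, so it suffices to show that each single coset $uN$ is negligible, and strongly negligible when $G$ is non-amenable. Let $c_n(g)$ denote the number of freely reduced words of length $n$ in $F(X)$ representing $g \in G$, so that $\rho_n(uN) = c_n(\eta(u))/|S_n|$. Fixing a reduced word $v \in F(X)$ with $\eta(v) = \eta(u)^{-1}$, the map sending $w \in S_n \cap uN$ to the free reduction of $wv$ is injective with image in $\bigcup_{|k-n|\leq |v|}(S_k \cap N)$; this gives $c_n(\eta(u)) \leq (2|v|+1)\,\max_{|k-n|\leq |v|} c_k(1)$, and since $|S_k|/|S_n|$ stays bounded for $|k-n|\leq |v|$, the problem reduces to controlling $c_n(1)/|S_n|$.

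The non-amenable case is then precisely the Grigorchuk cogrowth theorem (equivalently Kesten's amenability criterion): $G$ is non-amenable if and only if the cogrowth exponent $\limsup_n c_n(1)^{1/n}$ is strictly less than $2|X|-1$, and since $|S_n|$ grows exactly like $(2|X|-1)^n$, the ratio $c_n(1)/|S_n|$ decays exponentially, giving strong negligibility. For the general case, in which $G$ is only assumed infinite, one observes that $c_n(1)/|S_n|$ is the return probability at time $n$ of the non-backtracking random walk on $G$; since $G$ is infinite this walk escapes to infinity in distribution and return probabilities must tend to zero, as otherwise a compactness argument applied to the sequence of time-$n$ distributions would produce a nonzero finite translation-invariant measure on $G$. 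The main technical obstacle is exactly this general case: when $G$ is amenable but infinite, the cogrowth theorem gives only equality of exponential rates, so the required $o(1)$ correction must be extracted from the bare assumption that $G$ is infinite by a genuine random-walk or harmonic-analytic argument, rather than by the clean exponential growth-rate comparison that suffices in the non-amenable setting.
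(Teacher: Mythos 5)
Your part (a) and the non-amenable half of part (b) follow the paper's proof essentially verbatim: the paper also derives finiteness of $\eta(W)$ from algorithmic finiteness (it re-proves the content of Lemma \ref{af}(c) inline rather than citing it), reduces negligibility of $W$ to negligibility of a single coset $uN$ via exactly the translation/length-perturbation estimate you give, and handles the non-amenable case by Grigorchuk's cogrowth criterion, so that $\limsup |N\cap S_k|^{1/k}<2|X|-1$ yields exponential decay of $\rho_k(N)$ and hence of $\rho_k(uN)$.

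The one place you diverge is the general (possibly amenable) case, i.e.\ the claim that an infinite-index normal subgroup $N\le F(X)$ satisfies $\rho_k(N)\to 0$. The paper does not prove this; it cites it as a known fact (attributed to Woess, with details in \cite{BMR}). Your sketched replacement is not yet a proof: the measures $\mu_n$ on $G$ obtained by pushing forward the uniform measure on $S_n$ are the time-$n$ distributions of the non-backtracking walk, which are \emph{not} convolution powers of a fixed step distribution, so the classical argument that return probabilities of a random walk on an infinite group tend to zero does not transfer directly; and a pointwise subsequential limit of the $\mu_n$ is merely a sub-probability measure with no evident translation invariance, so the ``nonzero finite invariant measure on an infinite group'' contradiction does not come for free. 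You correctly identify this as the main technical obstacle, but as written it remains a gap; the honest options are to cite the negligibility of infinite-index subgroups as the paper does, or to supply the harmonic-analytic argument (e.g.\ via the spectral relation between the non-backtracking and simple random walks, or via the coset-density argument in \cite{BMR}) in full.
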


\begin{proof}
Suppose that $W \subseteq F(X)$  is a  computably enumerable subset with decidable Equality  Problem in $G$. If $W$ has an infinite subset of words which are pair-wise non-equal in $G$,  then it contains a computably enumerable  infinite subset of words which are pair-wise non-equal in $G$.
Indeed, one can enumerate one-by-one all the elements $\{w_1, w_2, \ldots \}$ in $W$ and using the decision  algorithm for the Equality Problem in $W$ one can remove each word  $w_i$ if it is equal in $G$ to one of the previously listed elements $w_1, \ldots,w_{i-1}$. This yields  a countably enumerable set of words which are pair-wise non-equal in $G$, - contradiction with virtual invisibility. Hence, $W \subseteq Cos(u_1, \ldots, u_k)$ for some $u_1, \ldots, u_k \in F(X)$, as claimed.

To show 2) it suffices to prove that every coset $Cos(u) = uN$ is negligible in $F(X)$. Since the group $G$ is infinite the subgroup $N$ is of infinite index, hence negligible \cite{Woess}, i.e.,
$$\rho(N) = \lim _{k\to \infty} \rho_k(N)  = 0,$$
where $\rho_k(N) = \frac{|S_k \cap N|}{|S_k |}$ and $S_k = \{w \in F(X) \mid |w| =  k\}$. If $|u| = m$ then, obviously,
$$\rho_k(uN) \leq \rho_{k-m}(N) + \rho_{k-m +1}(N) + \ldots +\rho_{k+m}(N),$$
 so $\lim _{k\to \infty} \rho_k(uN)  = 0$.

Recall that Grigorchuk's criterion of amenability \cite{Gri} claims that $G=F(X)/N$ is  amenable  if and only if $\lim\sup |N \cap S_k|^{1/k}= 2m-1$, where $S_k$ is the sphere of radius $k$ in $F(X)$, and $m=|X|$ (see also \cite{Coh}). This  easily implies that $N$ is exponentially negligible (see \cite{BMR} for details). Thus if $G$ is non-amenable, then the argument above shows that $Cos(u_1, \ldots,u_k)$ is exponentially negligible.
\end{proof}

\begin{rema}
A similar results concerning the Conjugacy Problem (CP) also holds in Dehn monsters. Recall that the CP for a group $G$ generated by a finite set $X$ asks whether two given words from $F(X)$ represent conjugate elements of $G$.

More precisely, for $u_1, \ldots, u_k \in F(X)$, let us denote by  $Con(u_1, \ldots, u_k)$  the set of words in $F(X)$ that define elements in $G$ which are conjugate to one of the elements defined by the words $u_1, \ldots, u_k$. In particular, $Con(u)$ (where $u \in F(X)$) is the preimage in $F(X)$ of the conjugacy class of the element $\eta(u) \in G$ under the canonical projection $\eta: F(X) \to G$. It is easy to show that for every recursively presented group $G$ and any $u_1, \ldots, u_k \in F(X)$, the set $Con(u_1, \ldots, u_k)$  is an infinite computably enumerable subset of $F(X)$ and the Conjugacy Problem for $G$ is decidable in $Con(u_1, \ldots, u_k)$. If  $G$ is a Dehn monster, then every computably enumerable subset $W \subseteq F(X)$ on which the CP is decidable  is contained in the set $Con(u_1, \ldots,u_k)$ for some $u_1, \ldots,u_k \in F$.
\end{rema}

We conclude with a problem motivated by the remark above as well as by the existence of groups with decidable WP and undecidable CP \cite{Miller}.

\begin{prob}
Does there exist a group $G$ generated by a finite set $X$ such that the WP is decidable in $G$ but for every subset $S\subseteq F(X)$ with decidable CP in $G$, the image of $S$ in $G$ is contained in finitely many conjugacy classes?
\end{prob}

\medskip
{\it Acknowledgments:} we are grateful to DIMACS and participants of the workshop on "Exotic constructions in group theory" for many fruitful discussions on the subject. We also thank The Centre de recherches mathŽmatiques (Montreal) for hosting the Thematic Semester on Geometric, Combinatorial and Computational Group Theory in the Fall of 2010 which gave us an excellent opportunity to work together on this paper. Our special thanks go to Robert Gilman, Paul Schupp, and Bakh Khoussainov who's been instrumental in shaping this topic.

\end{document}